\def\P{\mathbb I}
\def\R{\mathbb R}
\def\Z{\mathbb Z}
\def\sB{{\mathfrak B}}
\def\sb{{\mathfrak b}}
\newtheorem{prop}{Proposition}
\newtheorem*{theo*}{Theorem}
\newtheorem{defi}[prop] {Definition}
\newtheorem{lemm}[prop] {Lemma}
\newtheorem{cor}[prop]{Corollary}
\newtheorem{theo}{Theorem}
\newtheorem{question}[prop]{Question}
\theoremstyle{remark}
\newtheorem{exam}[prop]{Example}
\newtheorem{rema}[prop]{Remark}
\begin{document}

\title{Iterated Functions Systems, Blenders and Parablenders}

\author{Pierre Berger, \; Sylvain Crovisier, Enrique Pujals\thanks{This work is partially supported by the project BRNUH of Sorbonne Paris Cit\'e University and the French-Brazilian network.}}

\date{May 30, 2016}

\maketitle
%\tableofcontents
\begin{abstract} 
We recast the notion of parablender introduced in~\cite{BE15} as a parametric  IFS. This is done using the concept of open covering property and looking to parametric IFS as systems acting on jets.
\end{abstract}

 Given a contractive IFS on $\R^n$, it is well known that for any point $x$
in the limit set and any admissible backward itinerary $\underline x$, it is possible to
consider its continuation for any IFS nearby.
In particular, if we consider parametric perturbations, then the
continuation of the point (provided an itinerary) is given by a smooth
curve $a\mapsto \underline x(a)$. In this note we study the $r$-jets of such a continuation,
(i.e. the $r$ first derivatives at the zero parameter).
As it is explained in section~\ref{s.paraIFS} (see also proposition~\ref{p.paraIFS})
the $r$-jet can be viewed as a point in the limit set  of a new multidimensional
contractive IFS acting on $\R^{n.(r+1)}$ .

For certain type of parametric IFS as the one introduced in~\cite{BE15} (as a
matter of fact in a more general setting), a further property holds:
\emph{ the $r$-jet
of any $C^r-$curve (with appropriate bounds in the derivative),
coincide with the $r$-jets of the continuation of some points of the IFS}.
We show that this property can be recast
as saying that the limit set of the IFS acting  on the jets has
interior. As we  want that this property remains valid for nearby
parametric families, we consider below IFS such that their actions on
$r$-jets exhibits a \emph{covering property} (see definition~\ref{d.covering} in section~\ref{s.IFS}).
In section~\ref{section3} we give an example of such a contractive IFS. The
example may seem extremely  restrictive but as it is shown in theorem A
their action on $r$-jets has the covering property.

In sections~\ref{s.blender} and~\ref{s.parablender},
we recall that the covering property for IFS is related to the notion of blender
for hyperbolic sets. Our purpose is to explain how the present approach for IFS can be use
to revisit the notion of parablender that was introduced in~\cite{BE15}.

\section{Iterated Functions Systems}\label{s.IFS}
\begin{defi}
A (contracting) \emph{Iterated Functions System (IFS)} is the data of a finite family $(f_\sb)_{\sb\in \sB}$ of contracting maps on $\R^n$.
The IFS is of class $C^r$, $r\ge 1$, if each $f_\sb$ is of class $C^r$. 
\end{defi}
The topology on the set of IFS of class $C^r$ (with $\operatorname{Card}\, \sB $ elements) is given by the product strong topology   $\prod_\sB C^r(\R^n,\R^n)$. 
The limit set of an IFS is:
\[\Lambda:= \{x\in \R^n: \exists (\sb_i)_i\in \sB^{\Z^-}, \; x= \lim_{k\to +\infty}  f_{\sb_{-1}}\circ \cdots \circ f_{\sb_{-k}} (0)\}.\]

The limit set $\Lambda$ is compact. One is usually interested in its geometry. Natural questions are: 
\begin{question}
Under which condition the limit set $\Lambda$ has non-empty interior?

Under which condition the limit set has $C^r$-robustly non empty interior ?  
\end{question}
Let us recall that a system satisfies a property \emph{$C^r$-robustly} if the property holds also for any $C^r$-perturbations of the system.
Both questions are still open, although there are already partial answers to them.  Let us state a classical sufficient property:
\begin{defi}\label{d.covering} The IFS $(f_\sb)_{\sb\in \sB}$ satisfies the \emph{covering property} if there exists a non-empty open set $U$ of $\R^n$ such that:
\[\operatorname{Closure}(U)\subset \bigcup_{\sb\in \sB} f_\sb(U).\]
\end{defi}
\begin{exam} For $\lambda \in (1/2,1)$, the IFS spanned by the two following one-dimensional maps
\[f_{1}\colon x\mapsto \lambda x+1,\]
\[f_{-1} \colon x\mapsto \lambda x-1,\]
satisfies the covering property since 
$[-2,2]\subset   \phi_{1}((-2,2)) \cup \phi_{-1}((-2,2))$.
\end{exam}

One easily proves the following:
\begin{prop} 
If the IFS $ (f_\sb)_{\sb\in \sB}$ satisfies the covering property with the open set $U$, then the limit set of the IFS contains $C^1$-robustly $U$.
\end{prop}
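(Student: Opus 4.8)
The plan is to prove the two assertions separately: first that the covering property places all of $U$ in the limit set, and then that the covering property (hence this inclusion) is stable under $C^1$-perturbations. I begin by showing $\overline U\subset\Lambda$. Fix $x\in\overline U$ and build a backward itinerary whose intermediate points stay in $U$: since $x\in\overline U\subset\bigcup_{\sb\in\sB} f_\sb(U)$, there are $\sb_{-1}\in\sB$ and $y_1\in U$ with $f_{\sb_{-1}}(y_1)=x$; since $y_1\in U\subset\overline U$, there are $\sb_{-2}$ and $y_2\in U$ with $f_{\sb_{-2}}(y_2)=y_1$; iterating produces $(\sb_{-k})_k\in\sB^{\Z^-}$ and $(y_k)_k$ in $U$ with $f_{\sb_{-k}}(y_k)=y_{k-1}$ (setting $y_0=x$), so that $x=f_{\sb_{-1}}\circ\cdots\circ f_{\sb_{-k}}(y_k)$ for every $k$. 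Let $\lambda<1$ be a common contraction constant of the $f_\sb$ on the compact set $\overline U\cup\{0\}$ ($U$ being bounded). Then
\[
\bigl|x-f_{\sb_{-1}}\circ\cdots\circ f_{\sb_{-k}}(0)\bigr|
=\bigl|f_{\sb_{-1}}\circ\cdots\circ f_{\sb_{-k}}(y_k)-f_{\sb_{-1}}\circ\cdots\circ f_{\sb_{-k}}(0)\bigr|
\le\lambda^{k}\,\diam(\overline U\cup\{0\})\xrightarrow[k\to\infty]{}0 ,
\]
so $f_{\sb_{-1}}\circ\cdots\circ f_{\sb_{-k}}(0)\to x$ and $x\in\Lambda$. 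As $x\in\overline U$ was arbitrary, $U\subset\overline U\subset\Lambda$.

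Next I would show that the covering property holds robustly \emph{with the same $U$}; by the first step this immediately gives $U\subset\Lambda_g$ for every perturbed IFS $(g_\sb)$. Here I use that each $f_\sb$ is a $C^1$ diffeomorphism onto its image, so each $f_\sb(U)$ is open and $\{f_\sb(U)\}_{\sb}$ is a finite open cover of the compact set $\overline U$. Fix $x\in\overline U$, choose $\sb$ and $y\in U$ with $f_\sb(y)=x$, and a ball with $\overline{B(y,r)}\subset U$. Since $f_\sb$ is a diffeomorphism, $f_\sb(\partial B(y,r))$ lies at positive distance from $x$ and $f_\sb$ has degree $\pm1$ at $x$, whence $B(x,\rho)\subset f_\sb(B(y,r))$ for some $\rho>0$. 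For $g_\sb$ sufficiently $C^0$-close to $f_\sb$ on $\overline{B(y,r)}$, the image $g_\sb(\partial B(y,r))$ still avoids $B(x,\rho/2)$ and the local degree is unchanged, so $B(x,\rho/2)\subset g_\sb(B(y,r))\subset g_\sb(U)$. Covering $\overline U$ by finitely many balls $B(x_i,\rho_i/2)$ and intersecting the corresponding perturbation sizes yields one $C^1$-neighborhood of $(f_\sb)$ on which $\overline U\subset\bigcup_{\sb} g_\sb(U)$.

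The routine ingredient is the itinerary construction, which only uses uniform contraction to force the base point $0$ and the ``correct'' point $y_k$ to collapse to a common limit. The main obstacle is the robustness step: one must check that the relation $\overline U\subset\bigcup_{\sb} f_\sb(U)$ is open in the $C^1$ topology. This is exactly where the openness of the images $f_\sb(U)$ is used—guaranteed by each $f_\sb$ being a diffeomorphism onto its image—together with the stability under $C^0$-perturbations of a nonzero local degree (equivalently, of an image that surrounds $x$). The finiteness of $\sB$ and the compactness of $\overline U$ then upgrade this pointwise stability to a single uniform $C^1$-neighborhood, completing the proof.
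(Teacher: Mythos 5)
The paper never writes a proof of this proposition (it is introduced with ``One easily proves the following''), so there is no argument of the authors to compare yours against; your proposal has to be judged on its own terms. Your two-step plan is the natural one: (1) the covering property forces $\overline U\subset\Lambda$ by building backward itineraries whose intermediate points stay in $U$, and (2) the covering relation $\operatorname{Closure}(U)\subset\bigcup_{\sb\in\sB}f_\sb(U)$ is itself stable under perturbation, so that step (1) applies verbatim to the perturbed IFS. Step (1) is fine except for one slip: a contraction constant taken ``on the compact set $\overline U\cup\{0\}$'' does not suffice, since the intermediate points $f_{\sb_{-j}}\circ\cdots\circ f_{\sb_{-k}}(0)$ need not stay in that set; but the paper's maps are globally contracting, so you may simply take $\lambda=\max_{\sb\in\sB}\lip(f_\sb)<1$ and your estimate goes through. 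The degree-theoretic argument in step (2) (image of $\partial B(y,r)$ stays away from $x$, local degree $\pm1$ is homotopy-invariant, compactness of $\overline U$ plus finiteness of $\sB$ give a uniform neighborhood) is correct as written.

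The substantive criticism is that you assert, rather than prove, two hypotheses that do not follow from the paper's definitions: that $U$ is bounded, and that each $f_\sb$ is a diffeomorphism onto its image. A $C^1$ contracting map of $\R^n$ need be neither injective nor an immersion, so ``Here I use that each $f_\sb$ is a $C^1$ diffeomorphism onto its image'' is an added assumption, not a consequence. What saves your proof is that the proposition is actually false without some such hypotheses, so any correct argument must add them (or read them implicitly into the statement). Without boundedness: the single contraction $f(x)=x/2$ satisfies the covering property with $U=\R$, yet $\Lambda=\{0\}$. Without injectivity/openness: take a smooth nondecreasing $f_1$ with $0\le f_1'\le 0.6$, equal to $-1$ on $(-\infty,-0.9]$ and to $0$ on $[0.9,+\infty)$, and set $f_2(x)=-f_1(-x)$; then $f_1((-1,1))=[-1,0]$ and $f_2((-1,1))=[0,1]$ are \emph{closed} intervals, the covering property holds with $U=(-1,1)$, but the perturbed IFS $(f_1-\epsilon,\,f_2+\epsilon)$ has its limit set contained in $[-1-\epsilon,-\epsilon]\cup[\epsilon,1+\epsilon]$, hence disjoint from $(-\epsilon,\epsilon)$, and the conclusion fails. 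So your proof is correct for the intended reading of the proposition (bounded $U$ and branches that are diffeomorphisms onto their images, as in every example the paper considers, where the IFS arises from inverse branches of hyperbolic maps), but you should state these two hypotheses explicitly as assumptions rather than present them as automatic; as it stands, that presentation is the one genuine gap in the write-up.
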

Hence the covering property is a sufficient condition for an IFS to have $C^r$-robustly non-empty interior, for every $r\ge 1$.  
\begin{question}\label{quesIFS} 
Is the covering property a necessary condition to  have 
$C^r$-robustly non-empty interior?
\end{question}
The answer to this question is not known even when $n=1$.  It is not clear to us that the answer would be independent of $r$. Indeed there are  phenomena which occur for $r>1$ and not for $r=1$, such as the stable intersection of regular cantor set \cite{GY01, Ne79, Gu11}.

Nevertheless the case $n=1$ and $Card\; \sB=2$ is simple. 
Indeed consider the IFS generated by two contracting maps $f_1,f_2$ of $\R$. Let $I$ be the convex hull of $\Lambda$: the endpoints of $I$ are the fixed points of $f_1$ and $f_2$. 
 If the interiors of $f_1(I)$ and $f_2(I)$ are disjoint, then a small perturbation makes $f_1(I)$ and $f_2(I)$ disjoint. Then it is easy to see that the IFS has empty interior. Otherwise  the interiors of $f_1(I)$ and $f_2(I)$ are not disjoint. They cannot coincide since the interior of $I$ is non empty and $f_1,f_2$ are contracting.
By removing the $\epsilon$-neighborhood of the endpoints of $I$,
one gets an interval $I_\epsilon$ which is covered by the images by $f_1$ and $f_2$ of its interior.
Consequently, if $\Lambda$ has robustly non empty interior it must satisfy the covering property.

\section{IFS with parameters}\label{s.paraIFS}
We denote $\mathbb I^k= [-1,1]^k$.
One considers $C^r$ parametrized families, i.e. elements in the
the Banach space $C^r(\mathbb I^k\times \R^n,\R^n)$, that we denote
$(f_a)_{a\in \mathbb I^k}$.

For each parameter $a_0\in  \mathbb I^k$, one introduces the jet space
$J^r_{a_0}(\mathbb I^k,\R^n)$, whose elements are the Taylor series
$(x_a, \partial_ax_a, \dots, \partial_a^rx_a)_{|a=a_0}$ at $a=a_0$ of $C^r$ functions $a\mapsto x_a$
in $C^r(\mathbb I^k,\R^n)$. Each $C^r$ family of maps  $(f_a)_{a\in \mathbb I^k}\in C^r(\mathbb I^k\times \R^n,\R^n)$ acts on the space of jets as a map $\widehat f$ defined by:
\[
\widehat f\; \colon \;
(x_a, \partial_ax_a, \dots, \partial_a^rx_a)_{|a=a_0}\mapsto 
 (f_a(x_a),\partial_a(f_a(x_a)),\dots,\partial^r_a (f_a(x_a)))_{|a=a_0}.\]

Our goal is to study parametrized IFS:
\begin{defi}
An \emph{Iterated Functions System (IFS) with parameter} is the data of a finite families $(f_{\sb, a})_{\sb\in \sB}$ of contracting maps on $\R^n$ depending on a parameter $a\in \mathbb I^k$.

The IFS with parameter is of class $C^r$, $r\ge 1$, if each $(f_{\sb, a})_{a\in \mathbb I^k}$ is in red $C^r(\mathbb I^k\times \R^n,\R^n)$. 
\end{defi}

For every $a\in \mathbb I^k$, we consider the limit set $\Lambda_a$ associated to the system $(f_{\sb,a})_{\sb\in \sB}$ equal to 
the set of points $X_a(\underline \sb):=  \lim_{k\to +\infty}  f_{\sb_{-1}, a}\circ \cdots \circ f_{\sb_{-k}, a} (0)$, among all $\underline \sb= (\sb_i)_i\in \sB^{\Z^-}$. These points admit a continuation
when $a$ varies: each function
$a\mapsto X_a(\underline \sb)$ is of class $C^r$.
One can consider its jet:
$$J^r_{a_0}X(\underline \sb):=(X_a(\underline \sb),\partial_a X_a(\underline \sb),\dots,
\partial^r_a X_a(\underline \sb))_{|a=a_0}.$$

Let us observe that the image of $X_a(\underline \sb)$  by $f_{\sb, a}$ is the continuation of $X_a(\sb\underline \sb)$ (where $\sb\underline \sb$ means a new sequence such that the first element now is $b$) i.e.: $f_{\sb, a}(X_a(\underline \sb))=X_a(\sb\underline \sb).$ In particular, its partial derivatives respect to the parameter at $a=a_0$ is nothing else that the image of $\widehat f_{\sb}$ on the jets of $X_a(\underline \sb)$; therefore, the jets of the continuations are the limit set of $(\widehat f_{\sb})_{\sb\in \sB}$. More precisely:
\begin{prop}\label{p.paraIFS}
The set $J^r_{a_0}\Lambda$ is the limit set of the IFS $(\widehat f_{\sb})_{\sb\in \sB}$
acting on the $r$-jet space $J^r_{a_0}\R^n$ which is generated by the finite collection of maps
$\{(f_{\sb, a})_{a\in \mathbb I^k},\,\sb\in \sB\}$ .
 \end{prop}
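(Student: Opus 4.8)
The plan is to prove the two sets agree itinerary by itinerary: for each $\underline\sb\in\sB^{\Z^-}$ I will show that the backward iterates of the jet-IFS started at the jet of the constant curve converge, and that their limit is exactly $J^r_{a_0}X(\underline\sb)$. Since both $J^r_{a_0}\Lambda$ and the limit set of $(\widehat f_\sb)_{\sb\in\sB}$ are then parametrized by the same itineraries with the same values, they coincide.

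First I would record the functoriality of the jet action. Straight from its definition, $\widehat f_\sb$ sends the $r$-jet at $a_0$ of a curve $a\mapsto y_a$ to the $r$-jet of $a\mapsto f_{\sb,a}(y_a)$. Iterating this along a finite word and starting from the $r$-jet $\widehat 0$ of the constant curve $a\mapsto 0$ gives
\[
\widehat f_{\sb_{-1}}\circ\cdots\circ\widehat f_{\sb_{-k}}(\widehat 0)=J^r_{a_0}\bigl(a\mapsto f_{\sb_{-1},a}\circ\cdots\circ f_{\sb_{-k},a}(0)\bigr).
\]
Applying the same principle to the functional equation $f_{\sb,a}(X_a(\underline\sb))=X_a(\sb\underline\sb)$ recalled above yields the equivariance relation $\widehat f_\sb(J^r_{a_0}X(\underline\sb))=J^r_{a_0}X(\sb\underline\sb)$, so the set $J^r_{a_0}\Lambda$ is invariant under the family $(\widehat f_\sb)_{\sb\in\sB}$.

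Next I would verify that $(\widehat f_\sb)_{\sb\in\sB}$ is a genuine contracting IFS on a bounded forward-invariant region of $J^r_{a_0}\R^n$, so that the limit-set construction of Section~\ref{s.IFS} applies and its limit set really is the set of limits $\lim_k \widehat f_{\sb_{-1}}\circ\cdots\circ\widehat f_{\sb_{-k}}(\widehat 0)$. In jet coordinates $(\xi_0,\dots,\xi_r)$, the chain and Leibniz rules show that the derivative of $\widehat f_\sb$ is block lower triangular, every diagonal block being $D_xf_{\sb,a_0}(\xi_0)$ and hence of norm at most the contraction constant $\lambda<1$, while the off-diagonal blocks (built from higher $x$- and mixed $a$-derivatives of $f_\sb$) are merely bounded on the invariant region. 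Reweighting the coordinate $\xi_j$ by $M^{-j}$ for $M$ large then shrinks the off-diagonal blocks while leaving the diagonal untouched, making $\widehat f_\sb$ a contraction for the resulting adapted norm.

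Combining the two steps, the $k$-th backward iterate is the $r$-jet of the finite composition $f_{\sb_{-1},a}\circ\cdots\circ f_{\sb_{-k},a}(0)$, these jets converge by the contraction, and the invariance from the first step forces the limit to be $J^r_{a_0}X(\underline\sb)$; equivalently, this is the jet of the $C^r$-limit $a\mapsto X_a(\underline\sb)$. Reading this over all itineraries gives the claimed equality. I expect the contraction step to be the main obstacle: the off-diagonal blocks of $D\widehat f_\sb$ are not small on their own, so one must introduce the weighted norm and confine the dynamics to a bounded region on which those blocks stay uniformly bounded; once this is done, functoriality and equivariance are purely formal.
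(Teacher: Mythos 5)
Your proof is correct and takes essentially the same route as the paper: the paper's entire argument is the equivariance relation $f_{\sb, a}(X_a(\underline \sb))=X_a(\sb\underline \sb)$, hence $\widehat f_{\sb}(J^r_{a_0}X(\underline \sb))=J^r_{a_0}X(\sb\underline \sb)$, which is exactly your functoriality/equivariance steps. Your additional verification that $(\widehat f_\sb)_{\sb\in\sB}$ is genuinely contracting (block lower-triangular derivative with diagonal $D_xf_{\sb,a_0}$, the $M^{-j}$-weighted norm, a bounded forward-invariant region) fills in a point the paper only asserts---its introduction simply declares that the jet maps form ``a new multidimensional contractive IFS''---so it completes, rather than diverges from, the paper's proof.
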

 It is natural to wonder if the following set has (robustly) non-empty interior:

\[J^r_{a_0}\Lambda := \left\{ J^r_{a_0}X(\underline \sb)\;:\; \underline \sb \in \sB^{\Z^-}\right\}\; .\]
We notice that if  $J^r_0\Lambda$ has non empty interior, then there exists a non-empty open subset $U\subset C^r(\mathbb I^k, \R^n)$ such that for every $(x_a)_{a\in \mathbb I^k}\in U$ there is $\underline \sb\in \sB^{\Z^-}$ satisfying:
\[ x_a= X_a(\underline \sb)+o(\|a\|^r)\; .\]
 
\section{The covering property for an affine IFS acting on the jet space} \label{section3}
In this note we study a simple IFS with parameter. We set $k=n=1$
and choose $r\geq 1$.
Let:
\[f_{+1, a}\colon x\mapsto \lambda_a x+1,\]
\[f_{-1, a} \colon x\mapsto \lambda_a x-1,\]
where $(\lambda_a)_{a\in\mathbb I}\in C^r(\mathbb I, (-1,1))$
satisfies $(\partial_a \lambda_a)_{|a=0} \not= 0$.
\begin{rema}
Any system in an open and dense set of
parametrized IFS generated by a pair of contracting affine maps with the same contraction,
can be conjugated to a system which coincides with $(f_{+1,a}, f_{-1,a})$ for $a$ close to $0$.
\end{rema}

 As $k=1$, after coordinate change on the parameter space, we can also assume that $\lambda_a= \lambda+a$ for $a$ in a neighborhood of $0$.  Note that the maps induced on $r$-jet space $J^r_0(\mathbb I,\R)$ are now:
\[\widehat f_{+1}\colon (x_a, \partial_ax_a, \dots, \partial_a^r x_a)_{|a=0}
\mapsto (\lambda x_a +1, \lambda \partial_a x_a + x_a , 
\dots, 
 \lambda \partial_a^r x_a + r \partial_a^{r-1} x_a)_{|a=0}.\]
 \[\widehat f_{-1}\colon (x_a, \partial_ax_a, \dots, \partial_a^r x_a)_{|a=0}
\mapsto (\lambda x_a-1, \lambda \partial_a x_a + x_a , 
\dots,
 \lambda \partial_a^r x_a + r \partial_a^{r-1} x_a)_{|a=0}.\]

In \cite{HS2014,HS2015}, its is proved that the IFS generated by $(\widehat f_{+ 1}, \widehat f_{-1})$ has non empty interior. Let us adapt their proof to obtain the following stronger result. 

\begin{theo}\label{paraIFS}
For any $r\geq1$, if $\lambda\in (0,1)$ is close enough to $1$,
then the IFS  generated by $(\widehat f_{+ 1}, \widehat f_{-1})$
acting on the $r$-jet space $J^r_0(\mathbb I,\R)$
satisfies the open covering property.
\end{theo}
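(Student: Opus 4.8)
The plan is to regard the two maps induced on jets as affine maps of $\R^{r+1}=J^r_0(\mathbb I,\R)$ sharing a common linear part. Writing a jet as $y=(y_0,\dots,y_r)$, the displayed formulas give $\widehat f_{\pm 1}(y)=Ay\pm e_0$, where $e_0=(1,0,\dots,0)$ and $A$ is the lower triangular matrix with diagonal entries $\lambda$ and subdiagonal entries $A_{k,k-1}=k$. Thus $A$ is a single Jordan block with unique eigenvalue $\lambda\in(0,1)$, hence a contraction, and the two generators differ only by the fixed translation $2e_0$. A convenient reformulation: if $u_i$ denotes the $r$-jet at $a=0$ of $a\mapsto(\lambda+a)^{i-1}$, then $u_1=e_0$ and, because $A$ is the jet action of multiplication by $(\lambda+a)$, one has $Au_i=u_{i+1}$; consequently the limit set is $\{\sum_{i\ge 1}\epsilon_i u_i:\epsilon_i=\pm 1\}$ and each $\widehat f_\epsilon$ acts as the shift $\sum_i\epsilon_iu_i\mapsto\epsilon u_1+\sum_i\epsilon_iu_{i+1}$.

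I would then rewrite the covering property through the inverse branches. Since $A$ is invertible, $\overline U\subseteq\widehat f_{+1}(U)\cup\widehat f_{-1}(U)$ is equivalent to: for every $p\in\overline U$, at least one of $g_+(p)=A^{-1}p-v$ or $g_-(p)=A^{-1}p+v$ lies in $U$, where $v:=A^{-1}e_0$; equivalently, the expanded body $A^{-1}\overline U$ must be covered by the two translates $U+v$ and $U-v$. The branches $g_\pm$ are expanding (all eigenvalues $1/\lambda>1$) and differ by the fixed vector $2v$, whose higher-jet components are large, growing like $j!\,\lambda^{-(j+1)}$. This large, transverse separation of the two branches in the top coordinates is precisely what has to compensate the expansion.

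To tame the dependence on $r$, I would first normalize $A$ by the diagonal conjugacy $D=\mathrm{diag}(k!\,s^k)_{k=0}^r$, which fixes $e_0$ and turns $A$ into $\lambda I+s^{-1}S$ with $S$ the standard nilpotent shift; the scale $s$, together with the box side-lengths it controls, is left to be tuned. I would then seek $U$ as a box in these coordinates whose two extreme corners are placed at the fixed points $y^*_{\pm}=\pm(I-A)^{-1}e_0$ of $\widehat f_{\pm 1}$, and cover $\overline U$ by splitting it into the part handled by $g_+$ (near $y^*_+$) and the part handled by $g_-$ (near $y^*_-$). The model is the one-dimensional covering already recorded in the paper, where the interval $(-c_0,c_0)$ works precisely because each expanding fixed point sits at an endpoint, so the relevant branch expands back into the interior; the task is to propagate this to all jet coordinates.

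The main obstacle is exactly the Jordan-block nature of $A$: the inverse branches do not merely dilate the box, they also shear it, so placing the fixed points at the corners is not enough—one must show that the sheared, expanded body $A^{-1}\overline U$ still fits inside $(U+v)\cup(U-v)$. I expect this to reduce to a quantitative estimate comparing, coordinate by coordinate, the expansion rate $1/\lambda$ against the subdiagonal coupling (equivalently, against the spread $2v$ of the two branches), made simultaneously solvable in all $r+1$ coordinates by choosing the side-lengths $c_k$ growing geometrically in $k$, then $s$, then $\lambda$. This is where the hypothesis $\lambda$ close to $1$ is essential: as $\lambda\to 1$ the expansion $1/\lambda\to 1$ becomes arbitrarily mild while the coupling persists, so the two translates overlap enough to cover the mildly expanded box; closing these inequalities is the crux, and is the step I would adapt from \cite{HS2014,HS2015}. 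Should a single box fail to close the estimate in the non-overlap regions near the corners, the fallback is to establish covering for the iterated system of all words of a fixed length $m$, whose translation vectors $\{\sum_{i\le m}\epsilon_i u_i\}$ form a net that becomes fine relative to the strong contraction $A^m$ as $m\to\infty$, and to transfer the resulting robust non-empty interior back to the pair $(\widehat f_{+1},\widehat f_{-1})$.
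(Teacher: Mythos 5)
Your reduction of $\widehat f_{\pm1}$ to $y\mapsto Ay\pm e_0$ with $A$ a single Jordan block is correct, as is your (implicit) volume argument for why $\lambda$ must be close to $1$; but the central step of your plan --- a box $U$ (in diagonally rescaled coordinates) with corners at the fixed points, with side lengths $c_k$ to be tuned --- cannot be made to work, and this is not a quantitative estimate left to close: it is false for every choice of $c_k$, $s$ and $\lambda$. The $0$-jet coordinate is autonomous: after your rescaling the maps are $x_0\mapsto \lambda x_0\pm1$, $x_k\mapsto \lambda x_k+ x_{k-1}/s$. Consider the face of $\overline U$ where $x_0=c_0$ is maximal. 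Since every point of $\widehat f_{-1}(U)$ has $x_0<\lambda c_0-1<c_0$, this entire face must be covered by $\widehat f_{+1}(U)$ alone. But the points of $\widehat f_{+1}(U)$ with $x_0=c_0$ are images of points with $x_0'=(c_0-1)/\lambda$ fixed, so their $x_1$-coordinates fill an interval of length exactly $2\lambda c_1$ (the shear term $x_0'/s$ only translates this interval, it does not enlarge it), which cannot contain the face's $x_1$-extent $[-c_1,c_1]$ of length $2c_1>2\lambda c_1$. Hence no box, in any diagonal rescaling, satisfies the covering property; the same argument shows any valid covering region must taper to zero width in the higher-jet directions as $x_0$ approaches its extreme values (at the fixed points the fiber must degenerate to a point). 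So the difficulty you flag as ``the crux'' is not an inequality to be propagated coordinate by coordinate; within the box framework it is an obstruction.

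This is precisely why the paper never constructs the covering region directly in jet space. It uses \cite[Theorem 3.4]{HS2014} --- a monic polynomial $Q$ with $(x-1)^N\mid Q$ and coefficient sum $<2$, rescaled to $P(x)=\lambda^{-n}Q(\lambda x)$ so that $P^{(i)}(1/\lambda)=0$ for $i<N$ --- to build a rank-$N$ linear map $\pi\colon\R^n\to\R^N$ ($n=\deg P$ large) which semiconjugates a ``shift-like'' linear IFS $S_{\pm1}$ on $\R^n$ (shift the coordinates and create one new digit-dependent entry) to the affine pair $F_{\pm1}$. Upstairs the covering property holds for an honest box $\Delta=\prod_j(-\eta^j,\eta^j)$, because $S_\delta$ permutes coordinates rather than contracting fibers, and the condition $\sum_j|b_j|<2$ is exactly what lets one choose the digit $\delta$; the covering region downstairs is then $\pi(\Delta)$, a projected box which is automatically tapered in the way the obstruction above demands. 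Your fallback (covering for the $2^m$ words of length $m$, on the grounds that the translations $\sum_{i<m}\epsilon_iA^ie_0$ form a fine net relative to $A^m$) does not fill the gap: the cardinality count $2^m$ versus $\lambda^{-m(r+1)}$ is merely consistent when $2\lambda^{r+1}>1$, while actual $\lambda^m$-density of these vectors in all $r+1$ jet coordinates is the hard content of the theorem --- it is exactly what the Hare--Sidorov polynomial encodes --- and you give no argument for it (the transfer of covering from the $m$-word system back to the pair, which you also leave unproved, is true but needs a nested-neighborhood argument). In short: correct setup, correct heuristics about $\lambda\to1$, but the key idea of the proof (lifting to a higher-dimensional shift via the small-coefficient polynomial and projecting a box) is absent, and the construction proposed in its place provably fails.
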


\begin{cor}
Any IFS with parameter generated by two families of maps $C^r$-close to $(f_{+1,a})_a$ and
$(f_{-1,a})_a$ induces an IFS on the $r$-jet space $J^r_0(\mathbb I,\R)$
whose limit set has non-empty interior.
\end{cor}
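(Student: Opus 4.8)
The plan is to deduce the corollary from Theorem~\ref{paraIFS} by a robustness argument, combined with Proposition~\ref{p.paraIFS}. First I would record what Theorem~\ref{paraIFS} provides: for $\lambda$ close to $1$ the unperturbed IFS $(\widehat f_{+1},\widehat f_{-1})$ on $J^r_0(\mathbb I,\R)\cong\R^{r+1}$ satisfies the open covering property, so there is a nonempty open set $U$ with $\mathrm{Closure}(U)\subset \widehat f_{+1}(U)\cup\widehat f_{-1}(U)$.

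Next I would set up the perturbation. Given a family $(g_{\sb,a})_a$ that is $C^r$-close to $(f_{\sb,a})_a$, I would examine the induced jet map $\widehat g_\sb$: by the Leibniz and Fa\`a di Bruno formulas its components are universal polynomials in the jet coordinates $(x_0,\dots,x_r)$ whose coefficients are the partial derivatives $\partial_a^i\partial_x^m g_{\sb,a}|_{a=0}$. Consequently the assignment $(\text{family})\mapsto(\text{induced jet IFS})$ is continuous on compact sets, so $(\widehat g_{+1},\widehat g_{-1})$ is close to $(\widehat f_{+1},\widehat f_{-1})$ over $\mathrm{Closure}(U)$; moreover, since each $g_{\sb,a}$ is a contraction, the triangular linear part of $\widehat g_\sb$ has diagonal entries of modulus $<1$ and is invertible, so $\widehat g_\sb$ remains a contracting embedding on $\mathrm{Closure}(U)$.

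I would then apply robustness. The covering property is an open condition, since the strict inclusion of the compact set $\mathrm{Closure}(U)$ into the open set $\widehat f_{+1}(U)\cup\widehat f_{-1}(U)$ persists under small perturbations that keep the maps open embeddings; this is exactly the mechanism behind the Proposition stating that the covering property yields a robustly contained open set. Hence $(\widehat g_{+1},\widehat g_{-1})$ still covers $U$, and by that Proposition its limit set contains $U$, so it has nonempty interior. By Proposition~\ref{p.paraIFS} this limit set equals $J^r_0\Lambda$ for the family $(g_{\sb,a})_a$, which proves the corollary.

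The main obstacle I anticipate is matching topologies. The robustness statement concerns perturbations of the jet IFS, whereas the hypothesis only controls the family in the $C^r$ topology, and a na\"ive derivative count is unfavourable: differentiating the top component $\partial_a^r(g_a(x_a))$ once in $x_0$ formally calls for $\partial_x^{r+1}g$, so one cannot expect $C^1$-closeness of the jet maps from $C^r$-closeness of the family alone. The resolution is that covering is already stable under $C^0$-perturbations within the class of contracting embeddings, and that $C^r$-closeness of the family does yield $C^0$-closeness of $(\widehat g_{\sb})$ on the compact set $\mathrm{Closure}(U)$ while preserving the embedding property; carefully verifying these two points, so that the stability of the covering property genuinely applies, is the heart of the argument.
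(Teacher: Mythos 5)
Your overall architecture is the intended one (Theorem~\ref{paraIFS}, robustness of the covering property, and the identification of the limit set via Proposition~\ref{p.paraIFS}; the paper states the corollary without any written proof), and you deserve credit for isolating the real subtlety: $C^r$-closeness of the families yields only $C^0$-closeness of the induced jet maps, so the $C^1$-robustness proposition of Section~\ref{s.IFS} cannot be quoted off the shelf. The $C^0$-persistence of the covering inclusion itself can indeed be salvaged: for $x\in\operatorname{Closure}(A)$ with $B(x,\delta)\subset \widehat f_\sb(A)$ ($\delta$ a Lebesgue number), the set $V=\widehat f_\sb^{-1}(B(x,\delta))\subset A$ is open with $\widehat f_\sb(\partial V)=\partial B(x,\delta)$, and any continuous $\widehat g_\sb$ with $\|\widehat g_\sb-\widehat f_\sb\|_{C^0(\operatorname{Closure}(A))}<\delta$ has topological degree $\pm1$ at $x$ on $V$, hence $x\in\widehat g_\sb(V)$; this uses degree theory (or your triangular solve-coordinate-by-coordinate inverse) but requires neither contraction nor injectivity.

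The genuine gap is the claim that $\widehat g_\sb$ ``remains a contracting embedding'', which you then use to invoke the proposition ``covering $\Rightarrow$ the limit set contains $U$''. That claim is false on two counts. First, even the unperturbed maps $\widehat f_{\pm1}$ are not contractions for the Euclidean metric: their linear part $J$ carries the nilpotent entries $N-1,\dots,1$ above the diagonal, so $\|J\|>1$ and only its spectral radius is $\lambda<1$ (this alone could be repaired by an adapted norm). Second, and not repairable by any choice of norm: the components of $\widehat g_\sb$ are polynomials in $(x_1,\dots,x_r)$ whose coefficients are the functions $x_0\mapsto \partial_a^j\partial_x^m g_{\sb,a}(x_0)|_{a=0}$ with $j+m\le r$; when $j+m=r$ these are merely continuous, because the family is only $C^r$, so $\widehat g_\sb$ is in general not even Lipschitz. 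Since contraction is exactly what the proof of ``covering $\Rightarrow \Lambda\supset U$'' uses (one needs the backward compositions $\widehat g_{\sb_{-1}}\circ\cdots\circ\widehat g_{\sb_{-k}}$ to crush $A$ to points), your final step is unsupported as written. The missing idea is functoriality of jets: $\widehat g_{\sb_{-1}}\circ\cdots\circ\widehat g_{\sb_{-k}}$ is the jet map $\widehat G_k$ of the composed family $G_{k,a}=g_{\sb_{-1},a}\circ\cdots\circ g_{\sb_{-k},a}$, and using the triangular structure one proves, by induction on the jet order (a fiber-contraction argument of the kind behind Propositions~\ref{p.paraIFS} and~\ref{prop16}), that $\sup_{\underline x\in A}|\widehat G_k(\underline x)-\widehat G_k(\underline 0)|\to0$: the coefficient of the top variable is $\prod_{i}\partial_x g_{\sb_{-i}}$, bounded by $(\lambda+\epsilon)^k$, while the remaining terms are controlled by sums $\sum_{i\le k}(\lambda+\epsilon)^{k-i}\epsilon_i$ with $\epsilon_i\to 0$. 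With this estimate, every point of $A$, together with the backward itinerary produced by the covering property, is the $r$-jet of a continuation, i.e.\ lies in the limit set, and the corollary follows.
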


\begin{proof}[Proof of theorem~\ref{paraIFS}]
Let us remark that the IFS  generated by $(\widehat f_{+ 1}, \widehat f_{-1})$ is conjugated (via affine coordinates change) to the IFS on $\R^{N}$, $N=r+1$,
generated by the maps
$$F_{+1}\colon X\mapsto J X+ T,\quad F_{-1}\colon X\mapsto J X- T,$$
where $T= (0,\dots, 0,1)$ and 

$$J=\begin{pmatrix}
\lambda & {N-1} & & & 0\\
 & \lambda & \dots & & \\
 & & \dots & \dots &\\
 & & & \lambda & 1\\
 0 & & & & \lambda
\end{pmatrix}.$$
One introduces a polynomial $P(x)=b_nx^n+b_{n-1}x^{n-1}+\dots+b_0$ with large degree $n$ which satisfies:
\begin{enumerate}
\item[i.] $b_0\neq 0$, $b_n=1$,
\item[ii.] $\sum_{j=0}^{n-1}|b_j|<2,$
\item[iii.] $P^{(i)}(1/\lambda)=0$ for $0\leq i\leq N-1$ (where $P^{(i)}(x)$ denotes the $i^\text{th}$
deviated polynomial of $P$),
\item[iv.] $P$ induces a projection $\pi\colon \R^n\to \R^N$ with rank $N$, defined by
$$\pi(u_{-n},\dots,u_{-1})=\bigg(\sum_{k=0}^{n-1}u_{k-n}.B_k^{(N-i)}(\lambda)\bigg)_{1\leq i\leq N}\quad \text{with} \quad
B_k(x)=\sum_{j=0}^kb_jx^{k-j}.$$
\end{enumerate}

\begin{prop}
For any $N\geq 1$, if $\lambda\in (0,1)$ is close enough to $1$,
there exists a polynomial $P$ satisfying conditions (i)--(iv).
\end{prop}
\begin{proof}
From \cite[Theorem 3.4]{HS2014}, there exists a monic polynomial
$Q(x)=x^n+a_{n-1}x^{n-1}+\dots+a_0$ such that $\sum_{i=0}^{n-1}|a_i|<2$
and $(x-1)^N|Q(x)$. Dividing by some $x^k$, one can assumes that $a_0\neq 0$.
One then sets $P(x)=\lambda^{-n}Q(\lambda\cdot x)$. Provided $\lambda$ is close enough to $1$,
it satisfies the conditions (i)--(iii). In order to check the last item, it is enough to check that
the following matrix has rank $N$:
$$\begin{pmatrix}
B_0 & B_1 & \cdots & B_{n-1}\\
B^{(1)}_0 & B^{(1)}_1 & \cdots & B^{(1)}_{n-1}\\
 \vdots &  \vdots & \ddots & \vdots &\\
B^{(N-1)}_0 & B^{(N-1)}_1 & \cdots & B^{(N-1)}_{n-1}
\end{pmatrix}.$$
This can be easily deduced from the fact that $B_0,B_1^{(1)},\dots,B_{N-1}^{(N-1)}$
are constant and non-zero polynomials and that $B_k^{(i)}=0$ when $k<i$.
\end{proof}

Let $S_{-1},S_1$ be the linear automorphisms of $\R^n$ defined for $\delta\in \{+1,-1\}$ by:
$$S_{\delta}\colon (u_{-n+1},\dots,u_{0})\mapsto (u_{-n},\dots,u_{-1})\quad \text{with} \quad
u_{-n}=\frac1{b_0} ( \delta-\sum_{j=1}^{n} b_{j} u_{j-n}).$$

\begin{prop}\label{p.sc}
$\pi$ is a semi-conjugacy:
$F_\delta\circ \pi=\pi\circ S_{\delta}$.
\end{prop}
Before proving the proposition, one checks easily the following relations.
\begin{lemm}\label{l1} If $1\leq k\leq n$ and $i\geq 1$,
$$B_k(x)=x\cdot B_{k-1}(x)+b_k \quad \text{and} \quad
B_k^{(i)}(x)=x\cdot B_{k-1}^{(i)}(x)+i\cdot B_{k-1}^{(i-1)}(x).$$
\end{lemm}

Since $B_n(x)=x^n\cdot P(1/x)$ and $P^{(i)}(1/\lambda)=0$ for $0\leq i\leq N-1$ one gets
\begin{lemm}\label{l2} If $0\leq i\leq N-1$,
$$B^{(i)}_n(\lambda)=0.$$
\end{lemm}

\begin{proof}[Proof of the Proposition]
One has to check $F_\delta\circ \pi \circ S_{\delta}^{-1}=\pi$.

One fixes $(u_{-n},\dots,u_{-1})$.
It is sent by $S_{\delta}^{-1}$ to $(u_{-n+1},\dots,u_0)$ with
$u_0=\delta-\sum_{j=0}^{n-1}b_ju_{j-n}.$
Then by $\pi$ to
$\bigg(\sum_{k=0}^{n-1}u_{k+1-n}\cdot B_k^{(N-i)}(\lambda)\bigg)_{1\leq i\leq N}$.
Applying $F_{\delta}$, one gets a vector $(v_1,\dots,v_N)$ whose $i^\text{th}$ coordinate coincides with

$$v_i=\lambda \sum_{k=0}^{n-1}u_{k+1-n}\cdot B_k^{(N-i)}(\lambda)+(N-i)\sum_{k=0}^{n-1}u_{k+1-n}\cdot B_k^{(N-i-1)}(\lambda)\quad
\text{if} \quad i\neq N,$$
$$v_N=\lambda \sum_{k=0}^{n-1}u_{k+1-n}\cdot B_k(\lambda)+\delta\quad
\text{otherwise}.$$
For the $N-1$ first coordinates, from lemma~\ref{l1} one gets
$$v_i=\lambda\cdot \sum_{k=1}^{n}u_{k-n}\cdot B_{k-1}^{(N-i)}(\lambda)+(N-i)\sum_{k=1}^{n}u_{k-n}\cdot B_{k-1}^{(N-i-1)}(\lambda)=\sum_{k=1}^{n}u_{k-n}\cdot B_{k}^{(N-i)}(\lambda)$$
$$= \sum_{k=0}^{n-1}u_{k-n}\cdot B_{k}^{(N-i)}(\lambda) \quad\quad\quad \quad\quad\quad \text{since $B_0^{(N-i)}(\lambda)=B_n^{(N-i)}(\lambda)=0$}.$$
For the last coordinate, one gets similarly from lemmas~\ref{l1} and~\ref{l2}
$$v_N=\lambda\cdot \sum_{k=1}^{n}u_{k-n}\cdot B_{k-1}(\lambda)+\delta=\lambda\cdot \sum_{k=1}^{n}u_{k-n}\cdot B_{k-1}(\lambda)+\sum_{k=0}^n u_{k-n}\cdot b_k=
\sum_{k=1}^{n}u_{k-n}\cdot B_{k}(\lambda)+u_{-n}\cdot b_0$$
$$= \sum_{k=0}^{n-1}u_{k-n}\cdot B_{k}(\lambda) \quad\quad\quad\quad\quad\quad\quad\quad \text{since $B_0(\lambda)=b_0$ and $B_n(\lambda)=0$}.$$
This gives $(v_1,\dots,v_N)=\pi(u_{-n},\dots,u_{-1})$ as required.
\end{proof}
\bigskip

Since $\sum_{j=0}^{n-1}|b_j|<2$, one can choose
$\eta>1$ such that ${ \eta^n} \sum_{j=0}^{n-1}|b_j| <\eta+1$ and let $A$ be the image by $\pi$
 of
$$\Delta:=(-{\eta^n},{ \eta^n})\times (-{ \eta^{n-1}},{\eta^{n-1}})\times\dots\times(-\eta,\eta).$$

\begin{prop} The subset $A$ is open and satisfies: $\operatorname{Closure}(A)\subset F_{-1}(A)\cup F_{1}(A)$.
\end{prop}
\begin{proof}
The linear map $\pi$ is open since it has rank $N$.
Since $\pi$ sends compact sets to compact sets, it is enough to prove
$$\pi(\operatorname{Closure}(\Delta))\subset F_{+1}\circ \pi (\Delta)\cup
F_{-1}\circ \pi (\Delta).$$
By proposition~\ref{p.sc}, one has to check the following inclusion:
$$\operatorname{Closure}(\Delta)\subset S_{+1}(\Delta)\cup
S_{-1}(\Delta).$$

Consider any point $(u_{-n},\dots,u_{-1})$ in $\operatorname{Closure}(\Delta)$.
By our choice of $\eta$ and since $|u_j|\le{ \eta^n}$ for each $-n\leq j\leq -1$,
there exists $u_0\in (-\eta,\eta)$ and $\delta\in \{-1,1\}$ satisfying the relation
$u_0=\delta-\sum_{j=0}^{n-1}b_ju_{j-n}$.
{ Since $|u_i|\leq \eta^i$ we get $|u_{i-1}|<\eta^{i}$.}
One deduces that  $(u_{-n+1},\dots,u_{-1},u_0)$ belongs to
$\Delta$. Since $b_n=1$,
one has $\sum_{j=0}^{n}b_ju_{j-n}=\delta$, so
 $S_{\delta}(u_{-n+1},\dots,u_{-1},u_0)=(u_{-n},\dots,u_{-1})$.
This proves the required inclusion.
\end{proof}
The covering property is thus satisfied and the Theorem is proved.
\end{proof}

\section{Blenders for endomorphisms}\label{s.blender}
Our motivations for studying the action of IFS on jet spaces come from hyperbolic differentiable dynamics, and more specifically from the study of blenders and para-blenders that we explain in these two last sections.

If $f\colon M\to M$ is a $C^1$-map on a manifold $M$,
a compact subset $K\subset M$ is \emph{hyperbolic} if:
\begin{itemize}
\item[--] $f$ is a local diffeomorphism on a neighborhood of $K$,
\item[--] $K$ is  invariant (i.e. $f(K)= K$),
\item[--] there exists an invariant sub-bundle $E^s\subset TM_{|K}$ and $N\geq1$ so that $\forall x\in K$:
\end{itemize}

\[ D_xf(E_x^s)\subset E^s_{f(x)},\quad \| D_xf^N|E^s_x\| <1,\quad
{\| p_{E^s_\bot}\circ (D_xf^N)^{-1}|E^s_{\bot,x}\|<1}, \]
where $E^s_{\bot,x}$ is the orthogonal complement of $E^s_x$ and $p_{E^s_\bot}$ the orthogonal projection onto it.

Note that the map $f$ is in general not invertible.
Hence one can define an unstable space at any $x\in K$,
but it is in general not unique: it depends on the choice of a preorbit of $x$.
We recall that the inverse limit $\overleftarrow K$ is the set of preorbits:
\[\overleftarrow K:= \{(x_i)_{i\le 0}\in K^{\Z^-} :\; f(x_i)=x_{i+1},\; \forall i<0\}.\] 
The map $f$ induces a map $\overleftarrow f\colon (x_i)\mapsto (f(x_{i}))$ on $\overleftarrow K$.

For every preorbit $\underline x =(x_i)_{i\le 0}\in \overleftarrow K$ and for every $\epsilon>0$ small enough, the following set is a submanifold of dimension $\operatorname{Codim}(E^s)$:
\[W^u(\underline x, \epsilon)=\{x'\in M:\exists \underline x'\in \overleftarrow K \text{ s.t. }
x'_0= x',\; \forall i\;d(x'_i ,x_i)<\epsilon \text{ and } \lim_{i\to -\infty} d(x'_i ,x_i)=0\},\]
and is called \emph{local unstable manifold}
(also denoted by $W^u(\underline x, \epsilon,f)$ when one specifies the map $f$).

We recall that $K$ is \emph{inverse-limit stable}: for every $C^1$-perturbation $f'$ of $f$, there exists a unique map $\pi_{f'}\colon \overleftarrow K\to M$ which is $C^0$ close to the zero coordinate projection $\pi \colon (x_i)_i \in \overleftarrow K \mapsto x_0\in M$ so that the following diagram commutes:
\[f'\circ \pi_{f'}=\pi_{f'}\circ \overleftarrow f\; .\] 
Moreover $\pi_{f'}(\overleftarrow K)$ is hyperbolic for $f'$ and is called the \emph{hyperbolic continuation of $K$ for $f'$}.
In particular any $\underline x$ has a continuation, that is the sequence $\underline x'=(x'_i)$
in $\pi_{f'}(\overleftarrow K)$ such that
$$x'_i=\pi_{f'}((x_{i+k})_{k\leq i}).$$
The local unstable manifold of $\underline x'$ will be denoted
$W^u(\underline x, \epsilon, f')$. When $\epsilon$ is implicit,
the local unstable manifolds are also denoted by $W^u_{loc}(\underline x)$ and $W^u_{loc}(\underline x, f')$.
\medskip

The notion of blender was first introduced in the invertible setting by \cite{BD96} to construct robustly transitive diffeomorphisms, and then \cite{BD99, DNP} to construct locally generic diffeomorphism with infinitely many sinks. The work \cite{BE15} deals with blenders for endomorphisms.
\begin{defi} A \emph{$C^r$-blender} for a $C^r$-endomorphism is a hyperbolic set $K$ such that the union of its local unstable manifolds has $C^r$-robustly a non-empty interior:
there exists a non-empty open set $U\subset M$ which is contained in the
union of the local unstable manifolds of the hyperbolic continuation of $K$ for any
endomorphism $f'$ $C^r$-close to $f$.
\end{defi}
The classical definition for diffeomorphisms is more general:
fixing an integer $d$ smaller than the stable dimension of $K$,
it asserts that there exists an open collection $U$ of embeddings of the $d$-dimensional disc
in $M$ such that any $D\in U$ intersects the union of the local unstable manifolds of the hyperbolic continuation of $K$ for any diffeomorphism $f'$ $C^r$-close to $f$.
\medskip

\begin{exam}\label{e.blender} For $\lambda \in (1/2,1)$, we consider a local diffeomorphism $f$ of $\R^2$ whose restriction to 
$([-2,-1]\cup[1,2])\times [-1/(1-\lambda),1/(1-\lambda)]$ is:
\[(x,y)\mapsto (4 |x|-6, \lambda y+\operatorname{sgn}(x)),\]
where $\operatorname{sgn}(x)$ is equal to $\pm 1$ following the sign of $x$.
The set of points $(x,y)$ whose iterates are all contained in
$([-2,-1]\cup[1,2])\times [-1/(1-\lambda),1/(1-\lambda)]$ is a hyperbolic set $K$
which is a $C^1$-blender.
\begin{proof}
Note that $K$ is locally maximal: any orbit $(x_n,y_n)_{n\in \Z}$ contained in a
small neighborhood of $([-2,-1]\cup[1,2])\times [-1/(1-\lambda),1/(1-\lambda)]$ belongs to $K$. For diffeomorphisms $C^1$-close,
such an orbit is contained in the hyperbolic continuation of $K$.

For $\eta>0$ small, let
$\Delta_\eta:= ([-2-\eta,-1+\eta]\cup [1-\eta, 2+\eta])\times [-2,2]$.
For every $C^1$-perturbation $f'$ of $f$, it holds:
\[f'(\operatorname{Interior}(\Delta_\eta))\supset [-2-\eta,2+\eta]\times [-2,2]\; .\] 
Hence every point $(x,y)\in [-2,2]\times [-2,2]$ admits an $f'$-preorbit $(\underline x,\underline y)=(x_n,y_n)_{n< 0}$
in $\Delta_\eta$. It shadows a unique preorbit $\underline z\in \overleftarrow K$.
Consequently we have $(x,y)\in W^u_{loc}(\underline z, f')$
and this shows that $K$ is a $C^1$-blender.
\end{proof}
\end{exam}
\bigskip

More generally, given a finite set $\sB$, one can construct 
disjoint intervals $\sqcup_{\sb\in \sB} I_\sb\subset [-1,1]$ and an expanding map $q\colon \sqcup_{\sb\in \sB} I_\sb\to [-1,1]$ so that $q(I_\sb)$ is equal to $[-1,1]$.

Then given an  IFS $(f_\sb)_{\sb\in \sB}$ by contracting diffeomorphisms of $\R^n$, we can define
a map:
\[f\colon (x,y)\in \sqcup_{\sb\in \sB} I_\sb\times \R^n\mapsto (q(x), f_\sb(y)),\quad \text{if } x\in I_\sb\; .\]
whose maximal invariant set $K$ is hyperbolic. The second coordinate projection of $K$ is the limit set $\Lambda$ of the IFS. Also  if the IFS satisfies the covering property, then $K$ is a blender (see also~\cite{BKR14}).
\medskip

Despite its fundamental aspect, our specific interest for question \ref{quesIFS} is to know whereas a covering-like property is equivalent to the above definition of blender.

\section{Parablenders}\label{s.parablender}

In this section we deal with $C^r$-families of endomorphisms of a compact manifold $M$, with $k\geq 1$ parameters, that is
elements in { $C^r(\mathbb I^k\times M,M)$}, denoted as $(f_a)_{a\in \mathbb I^k}$.

Let $f$ be a local diffeomorphism on $M$ with a hyperbolic set $K$.
Given { a $C^r$-family} $(f_a)_{a\in \P^k}$ $C^0$-close to the constant family $(f)_{a\in \P^k}$, for each $\underline x\in \overleftarrow K$ we can consider the family of local unstable manifolds { $(W^u_{loc}(\underline x,f_a))_{a\in \P^k}$}. It is easy to see that this family is $C^0$-close to {
the constant family} $(W^u_{loc}(\underline x, f))_{a\in \P^k}$. Actually  it is much more:
{ \begin{prop}\label{prop16}
%[Proposition 1.6 in \cite{BE15}]
For every $\underline x\in \overleftarrow K$, the set $\cup_{a\in \P^k }  \{a\}\times W^u_{loc}(\underline x,f_a)$ is a $C^r$-submanifold of $\P^k\times M$ which depends continuously on $\underline x$. In other words, the family of submanifolds $(W^u_{loc}(\underline x,f_a))_{a\in \P^k}$ is of class $C^r$ and depends continuously on $\underline x\in \overleftarrow K$ for the $C^r$-topology.
\end{prop}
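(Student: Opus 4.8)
The plan is to realize the whole family of local unstable manifolds as a single center-unstable manifold of an auxiliary fibered system, and then to extract both the $C^r$-regularity and the continuity in $\underline x$ from the invariant manifold theorem for partially hyperbolic sets. Concretely, I would first introduce the skew-product
$$F\colon \P^k\times M\to \P^k\times M,\qquad F(a,z)=(a,f_a(z)),$$
which fibers over the identity of $\P^k$. For each $a$ the map $f_a$ is $C^1$-close to $f$, so by the inverse-limit stability recalled above it admits a hyperbolic continuation $K_a:=\pi_{f_a}(\overleftarrow K)$ of $K$, and the set $\widehat K:=\bigcup_{a}\{a\}\times K_a$ is forward invariant under $F$. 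Over $\widehat K$ the tangent bundle carries a dominated splitting $E^s\oplus E^{cu}$: the stable bundle $E^s=\{0\}\oplus E^s_{f_a}$ is $DF$-invariant and uniformly contracted, while the complementary direction $E^{cu}$ projects onto $T\P^k\oplus E^u_{f_a}$, the parameter direction being dynamically neutral because $F$ restricts to the identity on the base. One checks that $E^s$ is dominated by $E^{cu}$, so that $\widehat K$ is partially hyperbolic with a neutral center equal to the parameter direction.

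I would then apply the center-unstable manifold theorem (the $C^r$-section theorem of Hirsch--Pugh--Shub, realized through the backward graph transform) along the center leaf $(a,\underline x(a))_a$ obtained by continuing a fixed $\underline x\in\overleftarrow K$ across parameters. Matching the two dynamical descriptions (drift in the neutral center corresponds exactly to moving the parameter, and any $F$-preorbit keeps $a$ fixed while shadowing the continuation of $\underline x$) identifies the resulting center-unstable manifold with $\bigcup_{a\in\P^k}\{a\}\times W^u_{loc}(\underline x,f_a)$, a $C^r$-submanifold of $\P^k\times M$ tangent to $E^{cu}$.

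The key point to secure, and where the geometry of the problem is decisive, is the $C^r$-regularity. The smoothness of a center-unstable manifold is governed by the bunching inequality $\|DF|_{E^s}\|\cdot m(DF|_{E^{cu}})^{-r}<1$; since the center is neutral one has $m(DF|_{E^{cu}})\geq 1$, so the inequality collapses to $\|DF|_{E^s}\|<1$, which holds for every $r$. Thus the manifold is as smooth as the family permits, namely $C^r$. I expect this bunching estimate, together with the correct partially hyperbolic setup for the non-invertible fibered map $F$, to be the main obstacle: it is precisely the neutrality of the parameter direction (the base map being the identity) that makes the stable contraction dominate the $r$-th power of the center-unstable conorm and forces $C^r$-regularity for all $r$.

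Finally, the section theorem also yields continuous dependence of the invariant center-unstable manifold on its base point in the $C^r$-topology. Since the center leaf $(a,\underline x(a))_a$ depends continuously on $\underline x$ through the continuations $\pi_{f_a}$, composing the two continuities gives that the family $(W^u_{loc}(\underline x,f_a))_{a\in\P^k}$ depends continuously on $\underline x\in\overleftarrow K$ for the $C^r$-topology, as asserted.
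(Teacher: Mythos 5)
Your proposal is correct and takes essentially the same route as the paper: the paper also passes to the skew-product $(a,x)\mapsto (a,f_a(x))$ on $\P^k\times M$, views the curves $\{(a,\pi_{f_a}(\underline x)):a\in\P^k\}$ as the leaves of an invariant lamination whose tangent (parameter) dynamics is neutral, hence $r$-normally hyperbolic for every $r$, and invokes an invariant-manifold persistence theorem of Hirsch--Pugh--Shub type (Proposition 9.1 of \cite{BEsbm}) to conclude that the local unstable sets of the leaves are $C^r$ and depend continuously on $\underline x$. Your bunching computation, where neutrality of the center collapses the condition to $\|DF|_{E^s}\|<1$ for all $r$, is exactly the paper's justification of $r$-normal hyperbolicity.
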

\begin{proof}
The submanifolds $\{(a, \pi_{f_a}(\underline x)): \; a \in \P^k\}$ among $\underline x\in \overleftarrow K$ form the leaves of lamination immersed in $\P^k\times M$. 
The dynamics $(a,x)\mapsto(a, f_a(x))$ leaves invariant this lamination, and is $r$-normally hyperbolic at it. 
By Proposition 9.1 of \cite{BEsbm}, the local unstable set of each of these leaves is a  $C^r$-submanifold which depends continuously on $\underline x\in \overleftarrow K$. 
%The family $\{a\mapsto {\color{red} W^u_{loc}(\underline x,f_a)}, \;\underline x\in \overleftarrow K\}$ is $C^r$-close to the constant family $\{a\mapsto {\color{red} W^u_{loc}(\underline x, f)}, \;\underline x\in \overleftarrow K\}$.
\end{proof}}
We are now able to state:
 \begin{defi}  A \emph{$C^r$-parablender} at  $a_0\in \P^k$ for a family of endomorphisms
 { $(f_a)_a\in C^r(\P^k\times M,M)$} is a hyperbolic set $K$ for $f_{a_0}$ such that:
\begin{itemize}
\item[--] for every $\gamma$ in a non-empty open subset $U$ of $C^r(\P^k,M)$,
\item[--] and every $(f'_a)_a$ in a neighborhood $V$ of  $(f_a)_a$ in { $C^r(\P^k\times M,M)$},
\end{itemize}
there exist  $\underline x\in \overleftarrow K$ and $\zeta\in C^r(\P^k,M)$ satisfying:
\begin{itemize}
\item[--] $\zeta(a)$ belongs to $W^u_{loc}(\pi_{f_a}(\underline x),f_a)$ for every $a\in \P^k$,
\item[--] the $r$-first derivatives of $\gamma$ and $\zeta$ are equal at $a_0$:
\[\zeta(a_0)= \gamma(a_0),\quad   D\zeta(a_0)= D\gamma(a_0),\quad \dots, \quad D^r\zeta(a_0)= D^r\gamma(a_0)\; .\]
\end{itemize}
\end{defi}
In particular $K$ is a $C^r$-blender.
\medskip

The concept of parablender was introduced in \cite{BE15} to prove that the diffeomorphisms with finitely many attractors are \emph{not typical} in the sense of Kolmogorov, a result in the opposite direction to a conjecture of Pugh-Shub \cite{PS95} and to the main conjecture of Palis \cite{Pa00}. 
The parablenders defined therein are based on IFS of $\R$ generated by 
$2^{\dim\, \{P\in \R[X_1,\dots ,X_{k}]:\; \deg(P)\le r\}}$ elements.

We give here a new example in the case $k=1$ and $n=2$, based on the above IFS theory.
Note that the number of elements is reduced to $2$ and is now independent from the smoothness $r$.

\begin{theo}\label{thm.parblender}
For any surface $M$ and any $r\geq 1$,
there exists a family $(F_a)_a\in { C^r(\mathbb I\times M,M)}$
which admits a $C^r$-parablender induced by an IFS with $2$ elements.
\end{theo}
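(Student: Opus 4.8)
The plan is to bootstrap the parablender from Theorem~\ref{paraIFS} by realizing the jet-level covering property geometrically, exactly mirroring the passage from the affine IFS of section~\ref{section3} to a blender described at the end of section~\ref{s.blender}. First I would set $k=1$, $n=1$ (so that $N=r+1$) and take the parametrized IFS $(f_{+1,a}, f_{-1,a})$ on $\R$ with $\lambda_a=\lambda+a$, with $\lambda\in(0,1)$ chosen close enough to $1$ that Theorem~\ref{paraIFS} applies: the induced maps $(\widehat f_{+1}, \widehat f_{-1})$ on the $r$-jet space $J^r_0(\mathbb I,\R)\cong \R^N$ satisfy the open covering property, with open set $A$ as constructed in the proof. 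This is the engine; everything else is a suspension.

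Next I would build the fibered map over a horseshoe base. Following the construction after Example~\ref{e.blender}, with $\sB=\{+1,-1\}$ I would choose two disjoint intervals $I_{+1}, I_{-1}\subset[-1,1]$ and an expanding map $q$ with $q(I_\sb)=[-1,1]$, and define the family
\[
F_a\colon (x,y)\in (I_{+1}\sqcup I_{-1})\times \R \longmapsto (q(x),\, f_{\sb,a}(y)),\quad \text{if } x\in I_\sb,
\]
extended to a local diffeomorphism of the surface $M$ (after identifying a neighborhood of a strip with a subset of $M$ and gluing, as in the blender example). For each parameter the maximal invariant set $K_a$ is hyperbolic with unstable direction along $x$ and stable direction along $y$; its continuation gives the hyperbolic set $K$ for $F_{a_0}$ at $a_0=0$. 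The local unstable manifolds are horizontal graphs over the $x$-interval, so a point of $W^u_{loc}(\underline x, F_a)$ over the central fiber is governed by the continuation $X_a(\underline\sb)$ of the IFS point, and its $r$-jet in $a$ is precisely $J^r_0 X(\underline\sb)\in J^r_0\Lambda$.

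The heart of the argument is then to translate the definition of $C^r$-parablender into the jet-covering statement. Given $\gamma\in C^r(\mathbb I, M)$ landing near the fiber and a perturbation $(F'_a)$ of $(F_a)$, I would observe via Proposition~\ref{p.paraIFS} that the achievable jets $J^r_0 X'(\underline\sb)$ for the perturbed IFS form the limit set of the perturbed induced IFS $(\widehat f'_{\sb})$ acting on $J^r_0(\mathbb I,\R)$. Since the unperturbed $(\widehat f_{\sb})$ satisfy the open covering property, the Proposition following Definition~\ref{d.covering} guarantees this limit set contains the open set $A$ robustly, i.e.\ for all $C^r$-close perturbations. Thus for any target jet $(\gamma(0), D\gamma(0),\dots,D^r\gamma(0))$ lying in $A$, there is an itinerary $\underline\sb$ and hence $\underline x\in \overleftarrow K$ whose unstable-manifold family $\zeta(a):=$ the point of $W^u_{loc}(\pi_{F'_a}(\underline x),F'_a)$ over $x=0$ matches all $r$ derivatives of $\gamma$ at $a_0=0$. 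That $\zeta$ is genuinely $C^r$ in $a$, and that matching $r$-jets is exactly matching the point of $J^r_0\Lambda$, is supplied by Proposition~\ref{prop16}.

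The main obstacle, I expect, is not the IFS computation — that is done — but the bookkeeping that makes the open set $U\subset C^r(\mathbb I,M)$ in the parablender definition correspond cleanly to the open set $A\subset J^r_0(\mathbb I,\R)$, uniformly over the $C^r$-neighborhood $V$ of families. Concretely I must verify that the coordinate changes (the affine conjugacy to $F_{\pm1}$ on $\R^N$, and the identification of jets of unstable graphs with jets of IFS continuations) are $C^r$-robust, so that the robustness of the covering property from the Proposition after Definition~\ref{d.covering} really yields robustness at the level of $(F'_a)$ and of curves $\gamma$ near a base curve. Handling the surface embedding so that $F_a$ is a well-defined endomorphism of a closed $M$ while preserving the local normal form is a routine but careful point I would dispatch as in Example~\ref{e.blender}.
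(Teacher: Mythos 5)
There is a genuine gap, and it is precisely at the point you dismiss as ``bookkeeping.'' Your reduction runs through Proposition~\ref{p.paraIFS} plus the robustness of the covering property for the induced jet IFS $(\widehat f'_{\sb})_{\sb}$. But the definition of a $C^r$-parablender quantifies over \emph{all} families $(F'_a)_a$ in a $C^r$-neighborhood of $(F_a)_a$ in $C^r(\mathbb I\times M,M)$, and such a perturbation in general destroys the skew-product structure: $F'_a$ is no longer of the form $(x,y)\mapsto (q(x), f_{\sb,a}(y))$, the vertical fibration is not preserved, and therefore there is no ``perturbed induced IFS $(\widehat f'_{\sb})$ on $J^r_0(\mathbb I,\R)$'' to which the Proposition following Definition~\ref{d.covering} could be applied. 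The object your argument needs simply does not exist for general perturbations; your scheme only treats perturbations that stay inside the class of parametrized skew-products (equivalently, perturbed parametric IFS, which is the content of the Corollary after Theorem~\ref{paraIFS}, not of Theorem~\ref{thm.parblender}). The paper's proof circumvents this by never using the perturbed 1D IFS: it works with the two inverse branches $\widehat F'^{-1}_{\pm1}$ of the action of the full two-dimensional family on $J^r_0(\mathbb I,\R^2)$, and strengthens the covering property of Theorem~\ref{paraIFS} into the condition $(\star)$, $\operatorname{Closure}(\widehat F^{-1}_{+1}(W_{+1})\cup \widehat F^{-1}_{-1}(W_{-1}))\subset W$, on product open sets $W_{\pm1}=B\times A_{\pm1}$, where $A=A_+\cup A_-$ comes from a Lebesgue-number splitting of the covering and $B$ is a box contracted by the inverse base dynamics. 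That inclusion is a compact-versus-open condition on inverse branches, hence open under $C^r$-perturbation of the family of surface maps even when the product structure is lost.

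A second, related ingredient is also missing. Even once an itinerary $\underline \delta$ is produced, your closing claim that ``matching $r$-jets is exactly matching the point of $J^r_0\Lambda$'' identifies jets of curves on unstable manifolds with jets of IFS continuations; this identification again holds only for the unperturbed product system, where unstable manifolds are horizontal segments at constant height $X_a(\underline \sb)$. For a perturbed family, Proposition~\ref{prop16} only tells you that $a\mapsto \zeta(a)\in W^u_{loc}(\pi_{F'_a}(\underline x),F'_a)$ is of class $C^r$; it does not give $D^j\zeta(0)=D^j\gamma(0)$. The paper needs an additional contraction argument for this: it considers the vertical segment $C(a)=[\zeta(a),\gamma(a)]$, pulls it back by the inverse branches $(F'_a|Y_{\delta_i})^{-1}$, uses a backward-invariant stable cone field $\mathcal{C}$ on which $DF'_a$ is $(\lambda+\eta)$-contracting, and deduces that the length of $C(a)$ together with its $r$ first derivatives at $a=0$ is bounded by $(\lambda+\eta)^{|i|}$ for every $i\le 0$, hence vanishes. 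Without this estimate (or a substitute), the equality of the $r$-jets of $\gamma$ and $\zeta$ at $a=0$ for perturbed families --- which is the whole content of the parablender property --- is not established.
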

\begin{proof}
The construction is realized inside a disc and can be extended to any surface $M$.
It uses the previous examples:
for $\lambda\in (1/2, 1)$, we consider a family  $(F_a)_{a\in \P}\in { C^r(\mathbb I\times \R^2,\R^2)}$ whose restrictions to 
$([-2,-1]\cup[1,2])\times [-1/(1-\lambda),1/(1-\lambda)]$ is:
\[F_a\colon (x,y)\mapsto (4 |x|-6, (\lambda+a) y+\operatorname{sgn}(x)).\]
The set of orbits $(x_n,y_n)_{n\in \Z}$ of $F_0$
that are contained in $([-2,-1]\cup[1,2])\times [-1/(1-\lambda),1/(1-\lambda)]$
project through the map $(x_n,y_n)_n\mapsto (x_0,y_0)$ on a hyperbolic set $K$.

Denoting $f_{\pm1,a}\colon y\mapsto (\lambda+a) y\pm 1$ the maps introduced in section~\ref{section3},
and $g_{\pm1}\colon x\mapsto \pm 4x-6$, we get:
\[F_a\colon (x,y)\mapsto (g_{\operatorname{sgn}(x)}(x), f_{\operatorname{sgn}(x),a}(y).\]

The families $(f_{\pm1,a})_{a\in \mathbb I}$ induce an IFS on the jet space $J_0^r(\P,\R)$
generated by two maps $(\widehat f_{\pm 1})$. 
Theorem \ref{paraIFS} states that (for $\lambda<1$ close enough to $1$)
there exists a non-empty open set $A\subset J_0^r(\P,\R)$ such that $\widehat f_{+1}( A)\cup \widehat f_{-1}( A)$ contains the closure of $A$.
 Let $\delta>0$ be the Lebesgue number of this covering: every point in $\operatorname{Closure}(A)$ is the center of a { closed } $\delta $- ball contained in   $\widehat f_{+1}( A)$ or in  $\widehat f_{-1}( A)$. 
 Let $A_+$ and $A_-$ be the subsets of $A$ formed by points whose $\delta$-neighborhoods are contained in  respectively $\widehat f_{+1}( A)$ and $\widehat f_{-1}( A)$.  
Note that $A_+$ and $A_-$ are open sets and:
\[A = A_+\cup A_-\; \text{and}\; 
\operatorname{Closure}( \widehat f_{+1}^{-1}(A_+)\cup \widehat f_{-1}^{-1}(A_-))\subset A \;.\]

On the other hand, $g_{+1}$ and $g_{-1}$ act on the $C^r$-jet space $J_0^r(\P,\R)\approx \R^{r+1}$ as maps
$$\widehat g_{\pm1}\colon (\partial^i_a x_a)\mapsto (g_{\pm1}(x_a), \pm 4 \partial_a x_a, \dots, \pm 4 \partial^r_a x_a).$$
 Let $B$ be the open subset of $J_0^r(\P,\R)$ equal to:
\[B:= (-2-\eta,2+\eta)\times (-\eta,\eta)^r\; ,\]
for $\eta>0$ small enough so that each of the inverse maps $\widehat  g_{\pm1}^{-1}$ sends the closure of $B$ into $B$.

We notice that the action $\widehat F$ of $(F_a)_a$ on the $r$-jets $J^r_0(\P,\R^2)$ has two inverse branches:  $\widehat F^{-1}_{-1}:=(\widehat  g_{-1}^{-1},\widehat f_{-1}^{-1})$ and $\widehat F^{-1}_{+1}:=(\widehat  g_{+1}^{-1},\widehat f_{+1}^{-1})$ satisfying with the open subsets $W_{\pm 1}:=B\times A_{\pm 1}$ and $W:=B\times A=W_{+1}\cup W_{-1}$ of $J^r_0(\P,\R^2)$ the following:
 \begin{equation}\tag{$\star$}\operatorname{Closure} (\widehat F^{-1}_{+1}(W_{+1})\cup \widehat F^{-1}_{-1}(W_{-1}))\subset W=W_{+1}\cup W_{-1}\; .
 \end{equation}
For an open set $V$ of $C^r$-perturbations $(F'_a)_a$ of $(F_a)_a$, the inverse of branches $\widehat F'^{-1}_{+1}$ and $\widehat F'^{-1}_{-1}$ of the induced action on the $r$-jets still satisfy ($\star$).
\medskip

Let $U$ be the non-empty open set of curves $a\mapsto \gamma(a)\in C^r(\P, \R^2)$ so that the $r$-jet
$\widehat \gamma:=(\gamma,\partial_a\gamma,\dots,\partial^r_a\gamma)_{|a=0}$ of $\gamma$ at $a=0$ lies in $W$.  By the latter inclusion $(\star)$, we can define inductively a sequence
$\underline \delta := (\delta_i)_{i\le 0}\in \{-1,+1\}^{\Z^-}$ 
and
$(\widehat \gamma_i)_{i\le 0}\in \prod_{i\le 0} W_{\delta_i}$  so that $\widehat \gamma_0 =\widehat \gamma$, and for $i\le 0$,  $\widehat \gamma_{i} = \widehat F'_{\delta_{i}}(\widehat \gamma_{i-1})$.  Note that given a $\widehat \gamma$, the sequences  $\underline \delta$ and $(\widehat \gamma_i)_{i\le 0}$ are in general not uniquely defined. 
We remark that $\widehat \gamma_i$ is the $r$-jet at $a=0$ of the curve  $a\mapsto \gamma_i(a)$
defined by
\[\gamma_i(a) := (F'_{a}|Y_{\delta_i})^{-1}
\circ \cdots \circ 
(F'_{a}|Y_{\delta_0})^{-1}(\gamma(a)),\]
\[\text{where } Y_{+1}:=[1-\eta, 2+\eta]\times \left[\frac{-1-\eta}{1-\lambda},\frac{1+\eta}{1-\lambda}\right]
\text{ and }
Y_{-1}:=[-2-\eta, -1+\eta]\times \left[\frac{-1-\eta}{1-\lambda},\frac{1+\eta}{1-\lambda}\right]  .\]

The sequence $\underline \delta$ defines a local unstable manifold of $K$
\[W^u_{loc}(\underline \delta; F_0):=  \bigcap_{j\ge 0} F^{j+1}_0(Y_{\delta_j}).\]
It admits a continuation $W^u_{loc}(\underline \delta; F'_a)$ for any family $(F'_a)_a$ close to $(F_a)_a$
and any parameter $a$ close to $0$. This unstable manifold also contains
the projection of a point $\underline x\in \overleftarrow K$ so that
$W^u_{loc}(\underline \delta; F'_a)={ W^u_{loc}(\underline x, F'_a)}$.

Let $\zeta(a)$ be the vertical projection of $\gamma(a)$ into $W^u(\underline \delta; f_a)$ for every $a\in \P$. As $(W^u(\underline \delta; f_a))_a$ is of class $C^r$ by proposition \ref{prop16}, the function $a\mapsto \zeta(a)$ is of class $C^r$. 
We now consider the vertical segment $C(a):= [\zeta(a),\gamma(a)]$. Up to shrinking slightly $V$, we can assume that the stable cone field $\mathcal{C}:= \{(u,v)\colon |u|\le \eta |v|\}$ is backward invariant by each $F'_a$ and $(\lambda+\eta)$-contracted by $DF'_a$, for every $a$ close to $0$. Thus the curve 
$$C_i(a) := (F'_a| Y_{\delta_{i}})^{-1}\circ \cdots \circ (F'_a | Y_{\delta_0})^{-1}(C(a))$$  
has its tangent space in $\mathcal{C}$ and connects $\gamma^i(a)$ to the local unstable manifold  { $W^u_{loc}\underline x, F'_a)$}. 

By proposition \ref{prop16}, the $r$ first derivatives of { $(W^u_{loc}(\underline x, F'_a))_a$}
are uniformly bounded.
By assumption $(\gamma_i(a))_i$ has its $r$-first derivatives for $a$ small enough contained in $W$,
hence uniformly bounded.
Thus there exists $A>0$ independent of $i$ and so that for any
$a$ small enough (depending on $i$),  the length of $C_i(a)$ is at most $ A\sum_{j=0}^r |a|^j+|a|^{r}\rho_i(a)$, with $\rho_i$ a continuous function equal to $0$ at $a=0$.  

We recall that $DF'_a|\mathcal{C} $ is $(\lambda+\eta)$-contracting. As $C_i(a)$ has its tangent space in $\mathcal{C}$ it comes that the length of $C(a)$ is at most $(\lambda+\eta)^{|i|} [A\sum_{j=0}^r |a|^j+|a|^r\rho_i(a)]$ for every $i\le 0$ and $a$ small enough in function of  $i$. 
 This proves that  the length of $C(a)$ and its $r$ first derivative w.r.t. $a$
 at $a=0$ are smaller than $(\lambda+\eta)^{|i|}$, for every $i\le 0$.  
Hence they vanish all and so the $r$-first derivatives of $\zeta$ and $\gamma$ are equal at $a=0$. \end{proof}

{
\section{Nearly affine blenders and parablenders}

The previous constructions may be realized in the following more general setting.
Let us fix $r\geq 1$ and choose $\lambda<1$ close to $1$.

\begin{defi}
For $\varepsilon>0$ small, we say that
a local diffeomorphism defined on a neighborhood of the rectangle
$R:=[-2,2]\times [-1/(1-\lambda), 1/(1-\lambda)]$
is a $\varepsilon$-\emph{nearly affine blender with contraction $\lambda$} if
there exists two inverse branches $g_+,g_-$ for $f^{-1}$:
\begin{itemize}
\item[--] $g_+$ is defined on a neighborhood of
$Y_+:=[-2,2]\times [(1-2\lambda)/(1-\lambda),1/(1-\lambda)]$ and is $\varepsilon$-close to the map
$(x,y)\mapsto (0,(y-1)/\lambda)$ for the $C^1$-topology;
\item[--] $g_-$ is well defined on a neighborhood of
$Y_-:=[-2,2]\times [-1/(1-\lambda),(2\lambda-1)/(1-\lambda)]$ and is $\varepsilon$-close to the map
$(x,y)\mapsto (0,(y+1)/\lambda)$ for the $C^1$-topology;
\item[--]
$g_+([-2,2]\times \{(1-2\lambda)/(1-\lambda),1/(1-\lambda)\})$ and
$g_-([-2,2]\times \{-1/(1-\lambda),(2\lambda-1)/(1-\lambda)\})$ are disjoint from $R$.
\end{itemize}
\end{defi}
The last item implies that there exist two maps $\psi_-<\psi_+:=[-2,2]\to [-1/(1-\lambda), 1/(1-\lambda)]$
whose graphs are contained in $Y_-$ and $Y_+$ respectively and contracted by the respective
branches of $f^{-1}$, so that the strip of $[-2,2]\to [-1/(1-\lambda), 1/(1-\lambda)]$ bounded by these two graphs
is contained in its image. Arguing as in the example~\ref{e.blender}, one shows that the maximal invariant
set in $R$ is a blender.
\begin{figure}[ht]
\begin{center}
\includegraphics[scale=0.4]{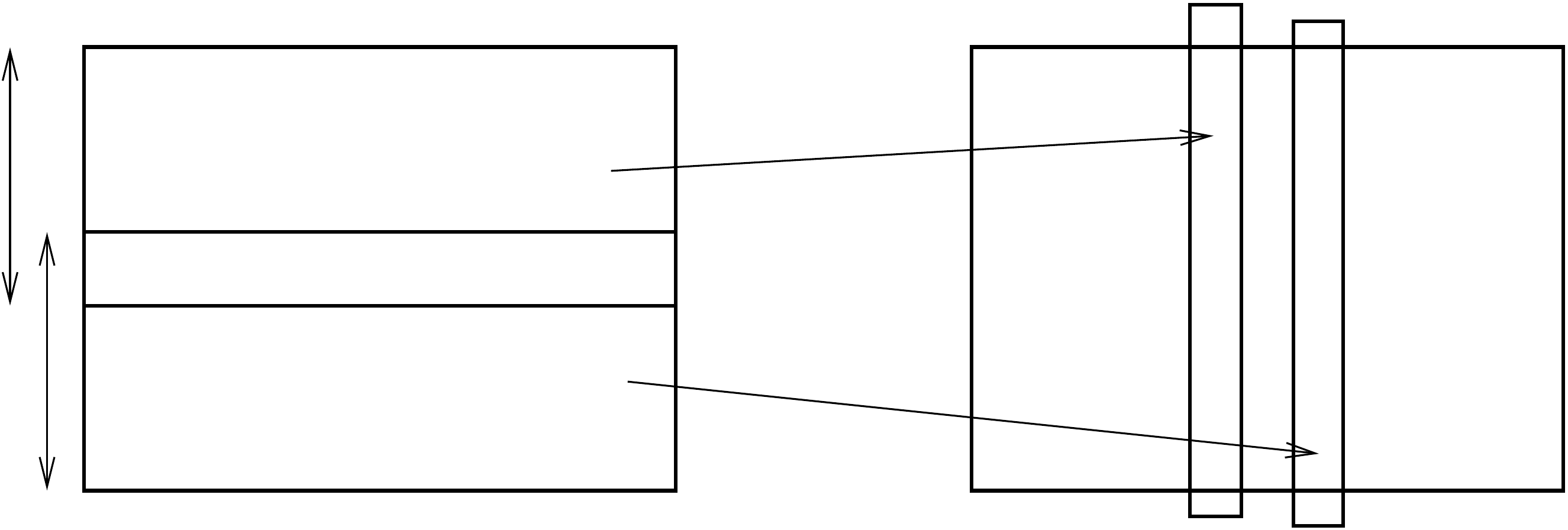}
\begin{picture}(0,0)
\put(-325,65){$Y_+$}
\put(-325,30){$Y_-$}
\put(-145,30){$g_-$}
\put(-145,80){$g_+$}
\end{picture}
\end{center}\end{figure}

\begin{defi}
For $\varepsilon>0$ small, we say that
a $C^r$-family $(f_a)_{a\in \mathbb I}$ is a $\varepsilon$-\emph{nearly affine parablender with contraction $\lambda$
and $a=0$} if:
\begin{itemize}
\item[--] $f_0$ is a $\varepsilon$-\emph{nearly affine blender with contraction $\lambda$};
\item[--]  for some $\alpha>0$, the family $(f^{-1}_a)_{|a|\leq \alpha}$ on a neighborhood of $Y_+$
is $\varepsilon$-close in $C^r$-topology to
$$(a,x,y)\mapsto (0,(y-1)/(\lambda+a));$$
\item[--]  for some $\alpha>0$, the family $(f^{-1}_a)_{|a|\leq \alpha}$ on a neighborhood of $Y_-$
is $\varepsilon$-close in $C^r$-topology to
$$(a,x,y)\mapsto (0,(y+1)/(\lambda+a)).$$
\end{itemize}
\end{defi}
The same proof as for theorem~\ref{thm.parblender}
shows that the maximal invariant set for $f_0$ in $R$ is a $C^r$-parablender for $(f_a)_{a\in \mathbb I}$
at $a=0$ provided $\lambda$ has been chose close enough to $1$ in function of $r$ and provided
$\varepsilon>0$ has been chosen small enough in function of $r$ and $\lambda$.
}

\bibliographystyle{alpha}

\begin{thebibliography}{dAMY01}

\bibitem[BKR]{BKR14}
P. G. Barrientos, Y. Ki, and A. Raibekas.
\newblock Symbolic blender-horseshoes and applications.
\newblock {\em Nonlinearity} \textbf{27} (2014), 2805--2839.

\bibitem[B1]{BEsbm} P. Berger. 
\newblock Persistence of laminations.
\newblock {\em Bull Braz Math Soc, New series}, \textbf{41}, 2, (2010), 259-319.

\bibitem[B2]{BE15}
P. Berger. Generic family with robustly infinitely many sinks.
To appear at \emph{Invent. Math}. ArXiv:1411.6441.

\bibitem[BD1]{BD96}
C.~Bonatti and L.~J. D{\'{\i}}az.
\newblock Persistent nonhyperbolic transitive diffeomorphisms.
\newblock {\em Ann. of Math.} \textbf{143} (1996), 357--396.

\bibitem[BD2]{BD99}
C.~Bonatti and L.~J.~D{\'{\i}}az.
\newblock Connexions h\'et\'eroclines et g\'en\'ericit\'e d'une infinit\'e de
  puits et de sources.
\newblock {\em Ann. Sci. \'Ecole Norm. Sup.} \textbf{32} (1999), 135--150.

\bibitem[DNP]{DNP}
L.~J. D{\'{\i}}az, A.~Nogueira, and E.~R. Pujals.
\newblock Heterodimensional tangencies.
\newblock {\em Nonlinearity} \textbf{19} (2006), 2543--2566.

\bibitem[HS1]{HS2014}
K. Hare and N. Sidorov.
On a family of self-affine sets: topology, uniqueness, simultaneous expansions.
To appear at \emph{Ergod. Th. Dyn. Syst}.
ArXiv:1410.4101.

\bibitem[HS2]{HS2015}
K. Hare and N. Sidorov.
Two-dimensional self-affine sets with interior points, and the set of uniqueness.
ArXiv:1502.07330.

\bibitem[M]{Gu11}
C.~G. Moreira.
\newblock There are no {$C^1$}-stable intersections of regular {C}antor sets.
\newblock {\em Acta Math.} \textbf{206} (2011) 311--323.

\bibitem[MY]{GY01}
C. G. Moreira and J.-C. Yoccoz.
\newblock Stable intersections of regular {C}antor sets with large {H}ausdorff
  dimensions.
\newblock {\em Ann. of Math.} \textbf{154} (2001), 45--96.

\bibitem[N]{Ne79}
S.~E. Newhouse.
\newblock The abundance of wild hyperbolic sets and nonsmooth stable sets for
  diffeomorphisms.
\newblock {\em Publ. Math. Inst. Hautes \'Etudes Sci.} \textbf{50} (1979), 101--151.

\bibitem[P]{Pa00}
J. Palis.
\newblock A global view of dynamics and a conjecture on the denseness of
  finitude of attractors.
\newblock {\em Ast\'erisque} \textbf{261} (2000), 335--347.

\bibitem[PS]{PS95}
C.~Pugh and M.~Shub.
\newblock {\em Stable ergodicity and partial hyperbolicity} \textbf{362} (1996), 182--187.

\end{thebibliography}
\def\cprime{$'$} \def\cprime{$'$} \def\cprime{$'$}

\bigskip

\noindent \emph{Pierre Berger}, {\small LAGA,  CNRS - UMR 7539, Universit\'e Paris 13, 93430 Villetaneuse, France.}

\vskip 3pt

\noindent \emph{Sylvain Crovisier}, {\small LMO, CNRS - UMR 8628, Universit\'e Paris-Sud 11, 91405 Orsay, France.}

\vskip 3pt

\noindent
\emph{Enrique Pujals}, {\small IMPA,
Estrada Dona Castorina 110, 22460-320 Rio de Janeiro, Brazil.}

\end{document}